\theoremstyle{plain}
\newtheorem{Lem}{Lemma}
\newtheorem{Cor}{Corollary}
\newtheorem{Thm}{Theorem}
\newcommand*{\Ker}{\ensuremath{\mathrm{Ker\,}}}
\newcommand*{\Ann}{\ensuremath{\mathrm{Ann\,}}}
\newcommand*{\Hom}{\ensuremath{\mathrm{Hom\,}}}
\newcommand*{\R}{\ensuremath{\mathbb{R}}}
\newcommand*{\Z}{\ensuremath{\mathbb{Z}}}
\newcommand*{\C}{\ensuremath{\mathbb{C}}}
\begin{document}

	\date{}

\author{
	L\'aszl\'o Sz\'ekelyhidi\\
	{\small\it Institute of Mathematics, University of Debrecen,}\\
	{\small\rm e-mail: \tt szekely@science.unideb.hu,}
}	
	\title{Spectral Analysis Implies Spectral Synthesis}

	\maketitle
	
	\begin{abstract}
	In this paper we show that spectral analysis implies spectral synthesis for arbitrary varieties on locally compact Abelian groups which have no discrete subgroups of infinite torsion free rank.
	\end{abstract}
	
	\footnotetext[1]{The research was supported by the  the
		Hungarian National Foundation for Scientific Research (OTKA),
		Grant No.\ K-134191.}\footnotetext[2]{Keywords and phrases:
		variety, spectral synthesis}\footnotetext[3]{AMS (2000) Subject Classification: 43A45, 22D99}
	\bigskip\bigskip


\section{Introduction}

Let $\mathcal A$ be a commutative topological algebra and $X$ a topological vector module over $\mathcal A$. This means that $X$ is a topological vector space and also a module over $\mathcal A$ so that the action $(a,x) \mapsto  a\cdot x$ is continuous in both variables. We say that {\it spectral analysis} holds for $X$, if every nonzero closed submodule of $X$ contains a finite dimensional submodule. By simple linear algebra, this is equivalent to the property that every nonzero closed submodule contains a common eigenvector for $\mathcal A$. On the other hand, we say that {\it spectral synthesis} holds for $X$, if every closed submodule of $X$ is the closure of the sum of all its finite dimensional submodules.  Clearly, spectral synthesis implies spectral analysis. 
\vskip.2cm

This terminology comes from the classical results of L.~Schwartz about spectral synthesis (see \cite{MR0023948}). In that case $\mathcal A$ is the {\it measure algebra} $\mathcal M_c(\R)$: the space of all compactly supported complex Borel measures on $\R$ with the linear operations and with the convolution product. The topology of $\mathcal M_c(\R)$ is the weak*-topology, when $\mathcal M_c(\R)$ is considered the topological dual of the space $\mathcal C(\R)$, the space of all continous complex valued functions on $\R$ equipped with the topology of uniform concergence on compact sets. Then $\mathcal C(\R)$ is a topological vector module over $\mathcal M_c(\R)$, where the latter acts on the function space by the convolution of compactly supported measures with continuous functions. Here the closed submodules are exactly the so-called {\it varieties}: these are the closed, translation invariant linear subspaces of $\mathcal C(\R)$. The common eigenvectors of $\mathcal M_c(\R)$ are exactly the exponential functions, and the finite dimensional varieties are exactly those consisting of {\it exponential polynomials}, which are finite linear combinations of exponential functions with polynomial coefficients. Schwartz's famous theorem reads as follows:

\begin{Thm} (L.~Schwartz, 1947) 
	Spectral synthesis holds for $\mathcal C(\R)$.
\end{Thm}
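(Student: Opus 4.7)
The plan is to dualize via the Fourier--Laplace transform and reduce the statement to an analytic division theorem for entire functions of exponential type. Given a variety $V \subseteq \mathcal C(\R)$, set
$$V^\perp = \left\{\mu \in \mathcal M_c(\R) : \int f\,d\mu = 0 \text{ for all } f\in V\right\}.$$
Since $V$ is translation invariant, $V^\perp$ is a closed ideal of the convolution algebra, and because $\mathcal M_c(\R)$ is the continuous dual of $\mathcal C(\R)$ equipped with the compact-open topology, the bipolar theorem yields $V = (V^\perp)^\perp$. A direct calculation with $\int x^k e^{\lambda x}\,d\mu(x)$ identifies this integral with a derivative of $\hat\mu$, showing that $x^k e^{\lambda x} \in V$ if and only if every transform $\hat\nu$, $\nu \in V^\perp$, has a zero of order strictly greater than $k$ at the point associated to $\lambda$. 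Thus the linear span $W$ of all finite dimensional submodules of $V$ is completely determined by the common zero set, counted with multiplicities, of the transforms of $V^\perp$.

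To prove spectral synthesis, i.e.\ $\overline W = V$, by the Hahn--Banach theorem it suffices to show $W^\perp \subseteq V^\perp$. Transferred to the transform side, this becomes the following: if $\mu \in \mathcal M_c(\R)$ is such that $\hat\mu$ vanishes, with at least the multiplicities prescribed above, on every common zero of the ideal $J = \{\hat\nu : \nu \in V^\perp\}$, then $\hat\mu \in \overline J$. By the Paley--Wiener theorem, $J$ is a closed ideal in the algebra of entire functions of exponential type satisfying appropriate bounds on the real axis, so the task is a purely analytic ideal-membership statement inside the Paley--Wiener algebra.

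This division step is the heart of the proof and the main obstacle. The classical approach is to manufacture, from the common zero set $Z$ of $J$, a canonical Hadamard--Weierstrass product $G$ of Paley--Wiener type whose zero divisor matches that of $J$, and to check, by an approximation argument, that $G \in \overline J$. For a given $\hat\mu$ satisfying the vanishing hypothesis, the quotient $\hat\mu/G$ is then entire; the delicate point is bounding it. This requires Phragm\'en--Lindel\"of estimates together with minimum-modulus theorems controlling $|G(z)|$ from below outside small exceptional neighborhoods of $Z$, which together place $\hat\mu/G$ back into the Paley--Wiener algebra. Once this is established, $\hat\mu = G\cdot (\hat\mu/G) \in \overline J$.

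With the division theorem available, the proof closes at once: the vanishing hypothesis on $\mu$ forces $\hat\mu \in \overline J$, hence $\mu \in V^\perp$, so $W^\perp \subseteq V^\perp$ and therefore $W$ is dense in $V$ in the compact-open topology. This establishes spectral synthesis for $\mathcal C(\R)$.
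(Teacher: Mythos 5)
The paper itself offers no proof of this statement: it is quoted as Schwartz's classical theorem and referred to \cite{MR0023948}, so your proposal can only be measured against the known proofs in the literature. Your outline is the standard one — dualize to the ideal $V^\perp$ in $\mathcal M_c(\R)$, transport it by the Fourier--Laplace transform to a closed ideal $J$ in the Paley--Wiener algebra, observe that the exponential monomials in $V$ correspond exactly to the common zeros of $J$ counted with multiplicity, and reduce synthesis via Hahn--Banach to the assertion that any $\hat\mu$ vanishing to the prescribed orders on that common zero set already lies in $\overline J$. All of that bookkeeping is correct.

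The genuine gap is that the division theorem you isolate as ``the heart of the proof and the main obstacle'' is in fact the entire content of Schwartz's theorem, and you do not prove it — you only name the tools (canonical products, Phragm\'en--Lindel\"of, minimum-modulus estimates) that a proof would use. Moreover, the specific route you sketch has a structural problem beyond missing estimates. First, you must show that a canonical product $G$ with the zero divisor of $J$ exists \emph{in the Paley--Wiener class}; controlling the indicator and the growth of such a product on the real axis is nontrivial. Second, the step ``check, by an approximation argument, that $G\in\overline J$'' is circular in the worst way: deciding that a function vanishing precisely on the common zero set of a closed ideal belongs to that ideal is itself the localization/synthesis problem you set out to solve, so you cannot invoke it as a checkable sub-step. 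Schwartz's actual argument (and the later treatments) avoids a single global generator and instead divides by individual elements of the ideal, using the Carleman transform and delicate lower bounds for entire functions of exponential type off exceptional disks; it is precisely this analysis that fails in two variables, which is why Gurevich's counterexamples exist. As written, your text is a correct road map of where the difficulty lives, not a proof that crosses it.
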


The above setting is general enough to cover several interesting cases. In \cite{MR0098951}, M.~Lefranc proved spectral synthesis on $\Z^n$ for each natural number $n$, where obviously $\mathcal C(\Z^n)$ is the set of all complex valued functions on $\Z^n$. A natural question is: what about spectral synthesis for $\mathcal C(G)$, where $G$ is an arbitrary locally compact Abelian group? In \cite{MR2340978}, the authors characterized those discrete Abelian groups, where spectral synthesis holds. On the other hand, it turned out that Schwartz's result cannot be generalized to several real variables: in \cite{MR0390759}, D.~Gurevich presented a variety in $\R^2$, for which spectral analysis does not hold, and another one, which contains finite dimensional subvarieties, but those do not span a dense subvariety.  
\vskip.2cm

A complete description of those locally compact Abelian groups where spectral synthesis holds for the space of all continuous functions was obtained in \cite{Sze23c}, where the present author proved the following two theorems:

\begin{Thm}
	Spectral synthesis holds on the compactly generated locally compact Abelian group if and only if it is topologically isomorphic to $\R^a\times Z^b\times C$, where $C$ is compact, and $a,b$ are nonnegative integers with $a\leq 1$.
\end{Thm}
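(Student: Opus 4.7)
The plan is to combine the structure theorem for compactly generated locally compact Abelian groups with the paper's main result (that spectral analysis implies spectral synthesis under the stated hypothesis) and with Gurevich's counterexample. By the structure theorem every compactly generated LCA group is topologically isomorphic to $\R^a\times\Z^b\times C$ with $C$ compact and $a,b$ nonnegative integers, so the problem reduces to determining which pairs $(a,b)$ are admissible.

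For the necessity of $a\le 1$, I would argue by contradiction. If $a\ge 2$, then $G$ admits $\R^2$ as both a closed subgroup and (via divisibility) a continuous quotient. Gurevich's construction in \cite{MR0390759} produces a variety $V\subseteq\mathcal C(\R^2)$ whose finite-dimensional subvarieties do not span a dense subspace. Pulling $V$ back along the projection $\pi\colon G\to\R^2$ gives a closed translation-invariant subspace $\pi^*V\subseteq\mathcal C(G)$. Since $\pi^*\colon\mathcal C(\R^2)\to\mathcal C(G)$ is a topological embedding that maps finite-dimensional translation-invariant subspaces to such subspaces and preserves their span, the failure of synthesis transfers, contradicting spectral synthesis on $G$.

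For the sufficiency, the first observation is that $G=\R^a\times\Z^b\times C$ with $a\le 1$ has no discrete subgroup of infinite torsion-free rank: every discrete subgroup has torsion-free rank at most $a+b<\infty$. Hence the paper's main theorem applies, reducing the task to proving spectral \emph{analysis} for $G$. I would establish this by reduction to the known factor-wise cases—Schwartz's theorem on $\R$, Lefranc's theorem on $\Z^b$ (each of which yields synthesis, hence analysis), and Peter--Weyl on the compact factor $C$—and then combine them. Concretely, given a nonzero variety $V\subseteq\mathcal C(G)$, one decomposes along the $\widehat C$-isotypic components and the $\Z^b$-direction via Fourier analysis on the dual $\R^a\times\T^b\times\widehat C$, extracts a nonzero one-variable slice, and invokes the $\R$-case to produce an exponential on that slice; standard arguments then lift it to a joint exponential on $G$. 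The restriction $a\le 1$ is used precisely at the point where the one-variable Schwartz theorem is invoked, since spectral analysis is known to fail on $\R^n$ for $n\ge 2$.

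The main obstacle is this sufficiency step, namely verifying spectral analysis uniformly on the mixed groups $\R^a\times\Z^b\times C$. The difficulty lies in showing that a common eigenvector of the \emph{full} measure algebra $\mathcal M_c(G)$ exists—not merely of one factor—and in keeping track of continuity across the $\R$-direction while the compact part is handled discretely on the dual side. Once this is secured, the main theorem of the paper closes the argument, and combining the two directions yields the stated equivalence.
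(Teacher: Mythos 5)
First, a structural remark: the paper does not prove this theorem here --- it is quoted as a result from \cite{Sze23c} --- so there is no in-text proof to compare against. Judged on its own merits, your necessity direction is sound and standard: $\R^2$ is a quotient of $\R^a\times\Z^b\times C$ when $a\ge 2$ (directly from the product decomposition; no divisibility argument is needed), and pulling back Gurevich's variety along the projection transfers the failure of synthesis, since finite-dimensional subvarieties of the pulled-back variety are exactly pullbacks of finite-dimensional subvarieties downstairs and $\pi^*$ is an embedding. Your reduction of sufficiency via Theorem \ref{main1} is also legitimate in principle: $\R^a\times\Z^b\times C$ has no discrete subgroup of infinite torsion-free rank (every closed subgroup of a compactly generated LCA group is compactly generated, so a discrete subgroup is finitely generated and of finite rank --- note your claimed bound $a+b$ is not the relevant point, finiteness is), and the main theorem of this paper does not rest on the statement being proved, so there is no circularity.

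The genuine gap is the one you name yourself: you have not proved spectral analysis on $\R^a\times\Z^b\times C$ with $a\le 1$, and the sketch you offer does not close it. Decomposing a variety along $\widehat C$-isotypic components and Fourier directions, extracting a one-variable slice, and then asserting that ``standard arguments lift'' an exponential on the slice to a common eigenvector of the full algebra $\mathcal M_c(G)$ is precisely where all the difficulty of the theorem lives; the existence of an exponential in a projection or restriction of $V$ does not by itself produce an exponential \emph{in} $V$, and no standard lifting lemma of this kind is available. In effect you have reduced ``synthesis holds on $G$'' to ``analysis holds on $G$,'' which is a real reduction given Theorem \ref{main1}, but the remaining half is a substantive theorem (it contains Schwartz's theorem as the case $b=0$, $C=0$) and cannot be dispatched by combining the factor-wise results, since a variety on a product is not determined by its behaviour on the factors. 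As it stands the proposal proves only the necessity of $a\le 1$ plus a conditional statement; to complete it you would need an actual proof of spectral analysis on these mixed groups, which is the content supplied in \cite{Sze23c} by a different route.
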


\begin{Thm}
	Spectral synthesis holds on the locally compact Abelian group $G$ if and only if $G/B$ is topologically isomorphic to $\R^a\times Z^b\times F$, where $B$ is the subgroup of all compact elements in $G$, $F$ is a discrete Abelian group of finite rank, and $a,b$ are nonnegative integers with $a\leq 1$.
\end{Thm}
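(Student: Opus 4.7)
The plan is to reduce this characterization to the compactly generated case (Theorem 2), together with the characterization of discrete Abelian groups admitting spectral synthesis from \cite{MR2340978}, using the quotient $G/B$ as the bridge. For the \emph{necessity} of the stated form, I would first observe that spectral synthesis descends to any Hausdorff continuous homomorphic image $G/N$: a variety $V\subseteq\mathcal C(G/N)$ pulls back to a variety $\widetilde V\subseteq\mathcal C(G)$, finite-dimensional subvarieties correspond under this pullback, and density of the span transfers. In particular spectral synthesis holds on $G/B$. Because $G/B$ has no nontrivial compact subgroup, every compactly generated open subgroup of $G/B$ is of the form $\R^{a'}\times\Z^{b'}$; Theorem 2 applied to each such subgroup forces $a'\leq 1$. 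Fixing a compactly generated open subgroup $H=\R^a\times\Z^b$ of $G/B$ with $a\leq 1$, the discrete quotient $(G/B)/H$ inherits spectral synthesis, and the main result of \cite{MR2340978} forces it to have finite torsion-free rank; piecing this back together yields $G/B\cong\R^a\times\Z^b\times F$ with $F$ discrete of finite rank, as required.

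For the \emph{sufficiency}, I would first argue that spectral synthesis on $G/B$ implies spectral synthesis on $G$. The key input here is that $B$, being the union of all compact subgroups of $G$, is a directed union of compact subgroups, on each of which spectral synthesis holds by Peter--Weyl; averaging functions against the normalized Haar measures of these subgroups resolves any variety on $G$ into $B$-invariant layers, each corresponding naturally to a variety on $G/B$. It therefore suffices to establish spectral synthesis on $\R^a\times\Z^b\times F$. Writing $F$ as the directed union of its finitely generated subgroups $F_\alpha$, each $\R^a\times\Z^b\times F_\alpha$ is compactly generated of the form covered by Theorem 2, hence carries spectral synthesis. Given a variety $V$ on $\R^a\times\Z^b\times F$, restriction to the compactly generated pieces produces varieties $V_\alpha$, and exponential monomials on each $\R^a\times\Z^b\times F_\alpha$ lift canonically to exponential monomials on the full group; one then needs to show that the union of the resulting finite-dimensional subvarieties is dense in $V$ in the topology of uniform convergence on compact sets.

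The main obstacle is the pasting argument at the end of the sufficiency part. The lifted finite-dimensional subvarieties of the $V_\alpha$ must actually lie inside $V$, and their union must be dense in $V$; this compatibility is not automatic, since finite-dimensional subvarieties at different levels of the filtration need not agree on overlaps and not every exponential polynomial on a subgroup extends to an element of $V$. Controlling this requires a careful analysis, along the filtration, of which exponentials on each piece belong to the restricted variety and of how the topology of $\mathcal C(G/B)$ interacts with the directed union structure. This is the step where the full machinery developed in \cite{Sze23c} is presumably brought to bear.
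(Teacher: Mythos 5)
This theorem is not proved in the paper you were given: it is quoted as one of the two main results of \cite{Sze23c}, so there is no in-paper argument to compare yours against, and your proposal has to be judged on its own. Its architecture is the natural one --- pass to $G/B$, use the structure theorem $G/B\cong\R^n\times D$ with $D$ discrete torsion free (Hewitt--Ross (24.35)), invoke Theorem 2 for compactly generated pieces and \cite{MR2340978} for the discrete part --- and the necessity direction is essentially complete modulo standard hereditary facts: synthesis does pass to Hausdorff quotients (your pullback argument works because compact sets in $G/N$ lift to compact sets in $G$) and to open subgroups, which forces $a\leq 1$ and the finite rank of the discrete part.

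The sufficiency direction, however, contains two genuine gaps. First, the step ``synthesis on $G/B$ implies synthesis on $G$'' is a substantial theorem, not a consequence of averaging as you describe it: integrating against the normalized Haar measure of a compact subgroup $K\subseteq B$ produces only the $K$-invariant component of a function. To resolve a variety $V$ one must decompose it into the isotypic subspaces $V_\chi=\{f\in V: f(x+k)=\chi(k)f(x)\ \text{for all}\ k\in K\}$ over all characters $\chi$ of $K$, prove that $\sum_\chi V_\chi$ is dense in $V$, extend each $\chi$ to an exponential of $G$ in order to untwist $V_\chi$ into a variety on $G/K$ --- and even then one has only factored out a compact subgroup $K$, not $B$, which need not be compact, so a further limiting argument over the directed family of compact subgroups is still required. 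Second, the pasting along the directed union $F=\bigcup_\alpha F_\alpha$ is exactly where the finite-rank hypothesis does its work; you correctly identify that exponential monomials in the restricted varieties $V_\alpha$ need not come from exponential monomials lying in $V$, but flagging the obstacle is not overcoming it. In \cite{MR2340978} the corresponding sufficiency statement for discrete groups of finite torsion-free rank is proved by algebraic means (arguments about the associated polynomial ideals), not by a soft directed-union limit, and there is no indication that your filtration scheme closes without importing that machinery. As it stands the proposal establishes necessity but only reduces sufficiency to the two statements it was supposed to prove.
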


In the last two theorems the condition $a$ is $0$ or $1$ is the consequence of Gurevich's counterexamples. In both examples the given variety contains finite dimensional varieties, still they do not span the whole variety. The point is, what we have to underline, that the existence of finite dimensional subvarieties in a variety does not mean that spectral analysis holds for the variety: the requirement for spectral analysis is that all nonzero subvarieties must contain finite dimensional subvarieties. In Gurevich's examples clearly, even spectral analysis fails to hold for the given varieties: not {\it every} nonzero subvariety includes finite dimensional subvarieties.
\vskip.2cm

This observation leads to the following problem: does there exist a variety on a locally compact Abelian group, for which spectral analysis holds, but spectral synthesis fails to hold? In the discrete case, the result in  \cite{MR2340978} says that spectral synthesis holds on {\bf every variety} on the group if and only if the torsion-free rank of the group is finite, but in the earlier paper \cite{MR2120269} it has been proved that on a discrete Abelian group spectral analysis holds on {\bf every variety} if and only if the torsion-free rank is less than the continuum. Here the key is: every variety. But what about a single variety? What is the exact connection between spectral analysis and spectral synthesis for a single variety? We know that synthesis implies analysis -- but what about the converse? This question has not been answered yet, not even by the characterization theorems, nor by the counterexamples. The purpose of this paper is to present a somewhat unexpected result: in fact, spectral analysis and spectral synthesis are equivalent for any varieties on locally compact Abelian groups, if the group does not contain a discrete subgroup of infinite torsion free rank.

\section{Preliminaries}
Here we summarize some known results we shall use in the subsequent paragraphs. 
\vskip.2cm

Given a locally compact Abelian group $G$ the continuous complex homomorphisms of $G$ into the multiplicative group of nonzero complex numbers, resp. into the additive group of complex numbers are called {\it exponentials}, resp. {\it additive functions}. A product of additive functions is called a {\it monomial}, and a linear combination of monomials is called a {\it polynomial}. A product of an exponential and a polynomial is called an {\it exponential monomial}, and if the exponential is $m$, then we call the exponential monomial an {\it $m$-exponential monomial}. Hence polynomials are exactly the $1$-exponential monomials. Linear combinations of exponential monomials are called {\it exponential polynomials}.  One dimensional varieties are exactly those spanned by an exponential, and finite dimensional varieties are exactly those spanned by exponential monomials (see \cite{MR3185617}). The {\it variety of the function $f$} in $\mathcal C(G)$, denoted by $\tau(f)$, is the intersection of all varieties including $f$.
\vskip.2cm

Annihilators play an important role in spectral analysis and synthesis. For each closed ideal $I$ in $\mathcal M_c(G)$ and for every variety $V$ in $\mathcal C(G)$, $\Ann I$ is a variety in $\mathcal C(G)$ and $\Ann V$ is a closed ideal in  $\mathcal M_c(G)$. Further, we have
$$
\Ann \Ann I=I\enskip\text{and}\enskip \Ann \Ann V=V.
$$
(see \cite[Section 11.6]{MR3185617}, \cite[Section 1]{MR3502634}).

\begin{Thm}\label{egyes}
	Let $G$ be a locally compact Abelian group. The following statements are equivalent.
	\begin{enumerate}[1.]
		\item $M$ is a closed maximal ideal in $\mathcal M_c(G)$.
		\item The residue ring of $M$ is the complex field.
		\item $\Ann M$ is a one dimensional variety in $\mathcal C(G)$.
		\item $M$ is the annihilator of the variety of an (unique) exponential.
		\item  $M$ is the closure of the ideal generated by all measures $\delta_{-y}-m(y)\delta_0$ for $y$ in $G$, where $m$ is a (unique) exponential.
	\end{enumerate}
\end{Thm}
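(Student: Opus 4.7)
Plan. I would prove the theorem by the cycle $(1) \Rightarrow (2) \Rightarrow (5) \Rightarrow (4) \Rightarrow (3) \Rightarrow (1)$. Three of the links are essentially formal applications of the $\Ann\Ann$ duality recalled just before the theorem. For $(5) \Rightarrow (4)$, I would compute the annihilator in $\mathcal{C}(G)$ of the ideal generated by $\{\delta_{-y} - m(y)\delta_0 : y\in G\}$: the condition $(\delta_{-y} - m(y)\delta_0)*f\equiv 0$ unfolds to the Cauchy-type equation $f(x+y) = m(y) f(x)$, which forces $f = f(0)\, m$; hence the annihilator is the line $\mathbb{C} m = \tau(m)$, and applying $\Ann$ again yields $M = \Ann \tau(m)$. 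The implication $(4) \Rightarrow (3)$ is the cited structure theorem that one-dimensional varieties are exactly those spanned by an exponential, and $(3) \Rightarrow (1)$ is immediate from the order-reversing nature of $\Ann$: a closed ideal strictly between $M$ and $\mathcal{M}_c(G)$ would correspond to a nonzero proper subvariety of the one-dimensional $\Ann M$, which is impossible.

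For $(2) \Rightarrow (5)$, the composition of the quotient map with a ring isomorphism $\mathcal{M}_c(G)/M \to \mathbb{C}$ yields a continuous algebra homomorphism $\phi : \mathcal{M}_c(G) \to \mathbb{C}$ with $\ker\phi = M$. Since $\mathcal{M}_c(G)$ carries the weak-* topology as the dual of $\mathcal{C}(G)$, $\phi$ is represented by a function $m\in \mathcal{C}(G)$ (up to the reflection built into the convolution action); multiplicativity on Dirac masses translates to $m(y+z) = m(y)m(z)$, so $m$ is an exponential. Each generator $\delta_{-y} - m(y)\delta_0$ lies in $\ker\phi = M$, so the closure of the ideal they generate is contained in $M$; equality follows because both closed ideals annihilate the same one-dimensional variety $\mathbb{C} m$ and $\Ann\Ann I = I$ for closed ideals.

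The main obstacle is $(1) \Rightarrow (2)$: promoting ``$M$ is a closed maximal ideal'' to ``the residue ring is exactly $\mathbb{C}$''. Here $V := \Ann M$ is a minimal nonzero variety (by order-reversal of $\Ann$), so a simple topological $\mathcal{M}_c(G)$-module with $\mathbb{C}\delta_0$ acting as scalar multiplication. A Schur-type dichotomy applies: for every $\mu \in \mathcal{M}_c(G)$, both $\{f\in V : \mu*f=0\}$ and the closure of $\mu*V$ are subvarieties of $V$, hence each is $0$ or $V$. My plan is to combine this with the fact that each $\delta_{-y}$ maps to a unit of $\mathcal{M}_c(G)/M$ (since $\delta_{-y}*\delta_y = \delta_0$), yielding a continuous character $G \to (\mathcal{M}_c(G)/M)^\times$. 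The duality with $\mathcal{C}(G)$ and continuity should then force this character to take scalar values, producing an exponential $m$ with $V = \mathbb{C} m$ and $\mathcal{M}_c(G)/M \cong \mathbb{C}$. Uniqueness of $m$ in (4) and (5) follows from the normalization $m(0) = 1$, since two exponentials spanning the same line must coincide.
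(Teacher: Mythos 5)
The paper itself does not prove Theorem~\ref{egyes}; it is quoted from \cite[Section~2]{MR3502634} and \cite{MR3185617}, so your attempt can only be judged against the statement. The routine links of your cycle are correct: $(5)\Rightarrow(4)$ via the functional equation $f(x+y)=m(y)f(x)$, then $(4)\Rightarrow(3)$, and the systematic use of $\Ann\Ann I=I$ for closed ideals. But the step you yourself flag as the main obstacle, $(1)\Rightarrow(2)$, is not proved. Everything you actually establish there --- the Schur dichotomy on the minimal variety $V=\Ann M$, and the fact that each $\delta_{-y}$ is a unit in the residue field $K=\mathcal M_c(G)/M$ --- is perfectly consistent with $K$ being a proper field extension of $\C$; the sentence ``the duality with $\mathcal C(G)$ and continuity should then force this character to take scalar values'' is the content of the theorem, not a derivation of it. $\mathcal M_c(G)$ is not a Banach algebra, so Gelfand--Mazur is unavailable, and locally convex topological fields properly containing $\C$ do exist, so continuity of $y\mapsto\delta_{-y}+M$ forces nothing by itself. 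Concretely, one must show that some translation operator has an eigenvector in $V$: since $K$ is a field, $\delta_{-y}+M$ is either a scalar or transcendental over $\C$, and in the transcendental case $V$ becomes a vector space over $\C(t)$, so that all translates $f(\cdot+ny)$ of a nonzero $f\in V$ are linearly independent over $\C$ --- which is not absurd on its face (think of $e^{x^2}$ on $\R$) and must be excluded by a genuine argument exploiting the weak${}^*$-closedness of $M$ and the realization of $V$ inside $\mathcal C(G)$. That is exactly the work done in the cited references and is missing here.

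Two smaller points. In $(2)\Rightarrow(5)$ you take the character $\phi:\mathcal M_c(G)\to\C$ to be weak${}^*$-continuous, hence represented by a continuous function; but statement $(2)$ is algebraic, and continuity of $\phi$ is equivalent to closedness of $M$, i.e.\ to statement $(1)$ --- so as a free-standing link in the cycle this step is circular unless you prove separately that a $\C$-valued character of $\mathcal M_c(G)$ is automatically continuous (equivalently, that $y\mapsto\phi(\delta_{-y})$ is automatically a \emph{continuous} exponential). And in $(3)\Rightarrow(1)$ the order-reversal argument only yields maximality of $M$ among \emph{closed} ideals; the clean repair is to prove $(3)\Rightarrow(2)$ directly: a one-dimensional variety is spanned by an exponential $m$, and $M=\Ann\tau(m)$ is then the kernel of the multiplicative linear functional $\mu\mapsto\int m(-y)\,d\mu(y)$, which gives $(1)$, $(2)$ and $(4)$ simultaneously and would also let you quarantine the hard unproved implication to a single arrow.
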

(See \cite[Section 2]{MR3502634}, \cite{MR3185617}). The annihilator of the variety of the exponential $m$ is denoted by $M_m$.

\begin{Cor}\label{kettes}
	Let $G$ be a locally compact Abelian group and $V$ a variety on $G$. The  following statements are equivalent.
	\begin{enumerate}[1.]
		\item Spectral analysis holds for the variety $V$.
		\item Every maximal ideal in the residue ring of $\Ann V$ is closed.
	\end{enumerate}
\end{Cor}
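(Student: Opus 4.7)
My plan is to exploit the annihilator correspondence between closed ideals in the residue ring $R := \mathcal{M}_c(G)/\Ann V$ and subvarieties of $V$, together with Theorem~\ref{egyes}. First I would rephrase spectral analysis ring-theoretically: by Theorem~\ref{egyes}, closed maximal ideals of $R$ are in bijection with one-dimensional subvarieties of $V$, i.e.\ with exponentials belonging to $V$. Hence ``every nonzero subvariety of $V$ contains an exponential'' becomes ``every proper closed ideal of $R$ is contained in some closed maximal ideal of $R$''. With this reformulation, $2 \Rightarrow 1$ is routine: Zorn's lemma in the unital ring $R$ embeds any proper closed ideal in a maximal ideal, and $(2)$ makes that ideal closed.

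For $1 \Rightarrow 2$, I would fix a maximal ideal $M$ of $R$ and consider the closed submodule $V[M] := \{v \in V : \mu v = 0 \text{ for all } \mu \in M\}$, which is a subvariety. If $V[M] \neq 0$, then any nonzero $v \in V[M]$ has annihilator in $R$ that is closed (being the preimage of $\{0\}$ under the continuous map $r \mapsto r v$), proper (since $v \neq 0$), and contains $M$; maximality then forces this annihilator to be exactly $M$, so $M$ is closed. Thus the whole content of $1 \Rightarrow 2$ reduces to showing $V[M] \neq 0$. By annihilator duality $V[M]$ equals $\Ann(\Ann V + M)$, so $V[M] = 0$ is equivalent to $M$ being dense in $R$, i.e.\ to $M$ being non-closed; it therefore suffices to rule out a non-closed maximal ideal.

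To do so I would study the subvariety $\overline{M V}$. If $\overline{M V} \subsetneq V$, then for any $v \in V \setminus \overline{M V}$ the set $\{r \in R : r v \in \overline{M V}\}$ is a proper closed ideal containing $M$, hence equals $M$, so $M$ is already closed. The genuinely delicate case is $\overline{M V} = V$ together with $V[M] = 0$: here applying (1) to $V$ itself furnishes an exponential $m \in V$, and since $M_m$ is closed while $M$ is not, $M \neq M_m$, so the $\C$-ideal $\chi_m(M)$ is nonzero and hence equal to $\C$. The \emph{main obstacle} is precisely to convert the density $\overline{M V} = V$ into a contradiction in this last subcase: a careful approximation argument, using the continuity of $\chi_m$, the continuity of the $R$-action on $V$, and the fact that $m$ is a common eigenvector on which $M$ acts surjectively onto $\C m$, is required to finish, and this delicate interaction between the weak$^*$-topology on $R$ and the module structure on $V$ is the technical heart of $1 \Rightarrow 2$.
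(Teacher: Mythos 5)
The paper does not actually prove this corollary: it is stated in the Preliminaries as a known result imported from \cite{MR3502634}, to be read off from Theorem~\ref{egyes} together with the duality $\Ann\Ann I=I$ and $\Ann\Ann V=V$. So the only comparison available is with that intended derivation. Your direction $2\Rightarrow 1$ matches it and is fine: for a nonzero subvariety $W\subseteq V$ the ideal $\Ann W\supseteq\Ann V$ is proper and closed, Zorn's lemma puts it inside a maximal ideal $M$ of $\mathcal M_c(G)$ containing $\Ann V$, hypothesis~2 makes $M$ closed, and Theorem~\ref{egyes} then gives an exponential $m$ with $\tau(m)=\Ann M\subseteq\Ann\Ann W=W$.

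The direction $1\Rightarrow 2$, however, contains a genuine gap, and you have in effect flagged it yourself. For a \emph{maximal} ideal $M$ the three conditions ``$V[M]=\Ann M\neq 0$'', ``$M$ is not dense'' and ``$M$ is closed'' are pairwise equivalent by the bipolar relation $\Ann\Ann M=\overline{M}$, so ``it suffices to show $V[M]\neq 0$'' and ``it therefore suffices to rule out a non-closed maximal ideal'' are restatements of the goal rather than reductions of it; likewise the case $\overline{MV}\subsetneq V$ closes nothing that was not already equivalent to the desired conclusion. What you call the ``main obstacle'' is therefore the \emph{entire} content of this implication, and your last sentence only asserts that some approximation argument ``is required to finish'' without supplying it. Moreover, the tool you propose for that case -- applying spectral analysis to $V$ itself to obtain a single exponential $m\in V$ and then playing $\chi_m$ against $M$ -- cannot work as stated: knowing $M\neq M_m$ only says $M\not\subseteq M_m$ and does not interact with the density of $M$. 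The argument this corollary actually rests on applies the hypothesis to a nonzero subvariety manufactured from the given $M$: one passes to the residue ring $\mathcal M_c(G)\big/M$, composes the quotient homomorphism with $\C$-linear functionals to exhibit a nonzero subvariety of $V$ annihilated by $M$, and then invokes spectral analysis on that subvariety to force the residue ring to be $\C$, which by Theorem~\ref{egyes} (equivalence of 1, 2 and 4) is exactly the closedness of $M$. That residue-field analysis is the missing idea in your proposal.
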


\begin{Thm}\label{harmas}
	Let $G$ be a locally compact Abelian group and $M_m$ a closed maximal ideal in $\mathcal M_c(G)$. The following statements are equivalent.
	\begin{enumerate}[1.]
		\item The function $f$ in $\mathcal C(G)$ is an $m$-exponential monomial.
		\item The variety $\tau(f)$ is finite dimensional, and there is anatural number $k$ such that $M_m^k$ is in $\Ann \tau(f)$.
	\end{enumerate}
\end{Thm}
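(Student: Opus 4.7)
The plan is to work with the explicit generators $\mu_y := \delta_{-y} - m(y)\delta_0$ of $M_m$ supplied by item~(5) of Theorem~\ref{egyes}, combined with the characterization cited in the preliminaries that a variety is finite dimensional if and only if it is spanned by exponential monomials.

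For $(1)\Rightarrow(2)$, I would write $f = m\cdot p$ with $p$ a polynomial of degree $d$. A direct computation gives
$$
(\mu_y * f)(x) = m(x)m(y)\bigl[p(x+y) - p(x)\bigr].
$$
Expanding each monomial of $p$ via $a(x+y) = a(x) + a(y)$ shows that the degree-$d$ part in $x$ of $p(x+y)$ agrees with $p(x)$, so $p(\cdot + y) - p(\cdot)$ has degree at most $d-1$. Iterating, $\mu_{y_1} * \cdots * \mu_{y_{d+1}} * f \equiv 0$ for every choice of $y_1, \ldots, y_{d+1} \in G$. Since $M_m^{d+1}$ is the closed ideal generated by such $(d+1)$-fold convolution products, and $\tau(f)$ is translation invariant, this annihilation propagates from $f$ to all of $\tau(f)$, yielding $M_m^{d+1} \subseteq \Ann \tau(f)$. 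Finite dimensionality of $\tau(f)$ follows from the same computation, since every translate of $f$ lies in the span of $m$ times the finite dimensional space of polynomials of degree at most $d$ built from the finitely many additive functions appearing in $p$.

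For $(2)\Rightarrow(1)$, finite dimensionality of $\tau(f)$ lets me write
$$
f = \sum_{i=1}^n m_i p_i
$$
with distinct exponentials $m_i$ and nonzero polynomials $p_i$. Suppose for contradiction that some $m_i \neq m$, and choose $y \in G$ with $m_i(y) \neq m(y)$. The analogous calculation
$$
\mu_y * (m_i p_i) = m_i \cdot \bigl[(m_i(y) - m(y)) p_i + m_i(y)\bigl(p_i(\cdot + y) - p_i\bigr)\bigr]
$$
shows that the polynomial factor has the same degree as $p_i$ with leading component $(m_i(y) - m(y)) p_i \neq 0$. Iterating with the same $y$, the element $\mu_y^k \in M_m^k$ sends $m_i p_i$ to $m_i$ times a polynomial with nonzero leading term $(m_i(y) - m(y))^k p_i$, so the $m_i$-component of $\mu_y^k * f$ is nonzero. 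By linear independence of the subspaces of $m_j$-exponential monomials for distinct $m_j$, this gives $\mu_y^k * f \neq 0$, contradicting $M_m^k \subseteq \Ann \tau(f)$. Hence every $m_i$ equals $m$ and $f$ is an $m$-exponential monomial.

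The main technical point, where care is needed, is the passage in $(1)\Rightarrow(2)$ from ``the $(d+1)$-fold product $\mu_{y_1} * \cdots * \mu_{y_{d+1}}$ annihilates $f$ for every choice of the $y_j$'' to ``$M_m^{d+1} \subseteq \Ann\tau(f)$''. This requires invoking that $M_m^{d+1}$ is the closure of the ideal generated by such products (a consequence of item~(5) of Theorem~\ref{egyes}) and then combining translation invariance of $\tau(f)$ with continuity of the module action to upgrade annihilation of $f$ to annihilation of $\tau(f)$. Once this bookkeeping is settled, both directions reduce to short polynomial degree calculations.
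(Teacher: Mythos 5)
Your proof is correct, but be aware that the paper itself contains no proof of Theorem~\ref{harmas}: it only cites Theorems~12 and~13 of \cite{MR3502634}, where the result is obtained via the annihilator--ideal machinery developed there. What you give is a self-contained elementary alternative based on the explicit generators $\mu_y=\delta_{-y}-m(y)\delta_0$ and degree bookkeeping; the one substantial external input is the structure theorem quoted in the paper's preliminaries (finite dimensional varieties are spanned by exponential monomials), which is exactly what licenses the decomposition $f=\sum_i m_ip_i$ in $(2)\Rightarrow(1)$. That direction is sound: for $m_i\neq m$ your identity shows that $\mu_y$ acts on the polynomial factor as $cI+N$ with $c=m_i(y)-m(y)\neq 0$ and $N$ degree-lowering, so the $k$-th iterate cannot kill the top-degree part of $p_i$, and directness of the sum of the $m_j$-exponential-monomial subspaces yields the contradiction. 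The point you flag in $(1)\Rightarrow(2)$ does need one more line than you give it, because convolution in $\mathcal M_c(G)$ is only separately weak*-continuous, so ``$M_m^{d+1}$ is the closed ideal generated by the products of generators'' is not immediate from item~(5) of Theorem~\ref{egyes}. The clean fix is an induction on $\deg p$: assuming every product of $d$ elements of $M_m$ annihilates every $m$-exponential monomial of degree at most $d-1$, a product $\nu_1*\cdots*\nu_{d+1}$ annihilates $f$ because $\nu_1\mapsto \nu_1*g$ is weak*-continuous for fixed $g$, elements of the ideal generated by the $\mu_y$ reduce to the measures $\mu_y*f=mq$ with $\deg q\le d-1$, and $\Ann\tau(f)=\{\nu:\nu*f=0\}$ is a closed ideal. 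With that adjustment your argument is complete, and it is more transparent than the citation it replaces.
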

(See \cite[Theorem 12, Theorem 13]{MR3502634}). 

\begin{Cor}\label{negyes}
	Let $G$ be a locally compact Abelian group and $V$ a variety on $G$. The  following statements are equivalent.
	\begin{enumerate}[1.]
		\item Spectral synthesis holds for the variety $V$.
		\item The annihilator $\Ann V$ is the intersection of all closed ideals $I$ satisfying $\Ann V\subseteq I$ whose residue ring is a local Artin ring.
		\item We have
		$$
		\Ann V=\bigcap_{\Ann V\subseteq M_m} \bigcap_{k=0}^{\infty} M_m^{k+1},
		$$
		where the first intersection is extended to all exponentials $m$ in $V$ for which $\mathcal M_c(G)\big/M_m^{k+1}$ is finite dimensional, for each $k$.
	\end{enumerate}
\end{Cor}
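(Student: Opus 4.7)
I propose to establish the cycle $(1)\Rightarrow(2)\Rightarrow(3)\Rightarrow(1)$, pivoting on the order-reversing bijection $I\leftrightarrow \Ann I$ recalled from the Preliminaries, together with the duality $\Ann\,\overline{\sum_\alpha V_\alpha}=\bigcap_\alpha \Ann V_\alpha$ and $\Ann\bigcap_\alpha I_\alpha=\overline{\sum_\alpha \Ann I_\alpha}$.

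\emph{(1)$\Rightarrow$(2).} Spectral synthesis expresses $V$ as the closure of the sum of its finite dimensional subvarieties $W$; dualising yields $\Ann V=\bigcap_W \Ann W$. Each residue ring $\mathcal M_c(G)/\Ann W$ is a finite dimensional commutative $\mathbb C$-algebra, hence Artinian, and by the structure theorem for commutative Artin rings splits as a finite product of local Artin rings. Correspondingly, each $\Ann W$ is a finite intersection of closed ideals whose residue rings are local Artin, and substituting refines $\Ann V$ into an intersection of the form demanded by (2).

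\emph{(2)$\Rightarrow$(3).} I would match the two intersections. Each $M_m^{k+1}$ listed in (3) is closed (finite codimension with Hausdorff quotient) and its residue ring is local (its only maximal ideal is the nilpotent $M_m/M_m^{k+1}$, by Theorem \ref{egyes}) and Artin (finite dimensional by hypothesis), so every $M_m^{k+1}$ in (3)'s family already belongs to (2)'s family, giving $\bigcap_{(2)} I\subseteq \bigcap_{(3)} M_m^{k+1}$. Conversely, fix an $I$ in (2); Theorem \ref{egyes} identifies the unique maximal ideal of $\mathcal M_c(G)/I$ as $M_m/I$ for a (closed) maximal ideal $M_m\supseteq I\supseteq \Ann V$, so $m\in \Ann M_m\subseteq \Ann\Ann V=V$; Artin nilpotence then forces $M_m^{k+1}\subseteq I$ for some $k$. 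Invoking Theorem \ref{harmas} to control the ``$m$-exponential monomial'' structure of $\Ann I$ inside $\Ann M_m^{k+1}$, one concludes that this pair $(m,k)$ is admissible for (3), whence $\bigcap_{(3)} M_m^{k+1}\subseteq M_m^{k+1}\subseteq I$; taking the intersection over $I$ delivers the reverse containment.

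\emph{(3)$\Rightarrow$(1).} Applying $\Ann$ to (3) and using the duality produces
\[
V=\Ann\Ann V=\overline{\sum_{m,k}\Ann M_m^{k+1}}.
\]
By Theorem \ref{harmas} each $\Ann M_m^{k+1}$ is a finite dimensional variety spanned by $m$-exponential monomials, and it lies inside $V$ because $\Ann V\subseteq M_m^{k+1}$. Hence $V$ is the closure of the sum of its finite dimensional subvarieties, i.e., spectral synthesis holds.

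\textbf{Main obstacle.} The delicate step is the verification inside (2)$\Rightarrow$(3) that $\mathcal M_c(G)/M_m^{k+1}$ is finite dimensional whenever some local Artin $I$ contains $M_m^{k+1}$. A priori the cotangent space $M_m/M_m^2$ can be infinite dimensional in a measure algebra (which is neither Noetherian nor locally finite), so general commutative algebra does not immediately force finite dimensionality of the larger quotient. The resolution should combine Theorem \ref{harmas} -- which realises finite dimensional varieties of $m$-exponential monomials as annihilators of ideals containing powers of $M_m$ -- with the local Artin hypothesis on $I$, probably via an induction on the nilpotence class of $M_m/I$ and control of successive quotients $M_m^j/M_m^{j+1}$. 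A secondary but routine verification, that each $M_m^{k+1}$ is closed when its residue ring is finite dimensional, follows from the general fact that a subspace of finite codimension in a Hausdorff topological vector space is closed as soon as the quotient topology is Hausdorff.
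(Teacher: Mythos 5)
The paper offers no proof of this corollary at all---it is imported by citation from \cite{MR3713567}---so there is no internal argument to compare with; judged on its own, your cycle $(1)\Rightarrow(2)\Rightarrow(3)\Rightarrow(1)$ is a sensible plan, and your arguments for $(1)\Rightarrow(2)$ and $(3)\Rightarrow(1)$ are essentially sound. The genuine gap is in $(2)\Rightarrow(3)$, at the very first containment, where you place every $M_m^{k+1}$ from the family in (3) into the family in (2). Membership in (2)'s family requires $\Ann V\subseteq M_m^{k+1}$, but the indexing in (3) only supplies $\Ann V\subseteq M_m$, and $\Ann V\subseteq M_m$ does not imply $\Ann V\subseteq M_m^{2}$. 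This is not a repairable technicality, because the equality in (3), read literally, fails for very simple synthesizable varieties. Take $G=\R$ and $V=\tau(1)$, the one-dimensional variety of constant functions: spectral synthesis holds trivially and $\Ann V=M_1=\{\mu:\hat\mu(0)=0\}$, yet
$$
\bigcap_{k=0}^{\infty}M_1^{k+1}\subseteq \overline{M_1^{2}}\subseteq\{\mu:\hat\mu(0)=\hat\mu'(0)=0\},
$$
which misses $\delta_1-\delta_0$; in fact $\bigcap_{k}M_1^{k+1}=\{0\}$ here, since the Fourier--Laplace transform of a compactly supported measure is entire. So statement (1) holds while the right-hand side of (3) is strictly smaller than $\Ann V$.

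What the equivalence actually needs---and what your own dualisation in $(1)\Rightarrow(2)$ already points to---is the intersection of the annihilators of the finite-dimensional subvarieties $V\cap\Ann M_m^{k+1}$, namely
$$
\Ann V=\bigcap_{m,k}\overline{\Ann V+M_m^{k+1}},
$$
the closure taken in the weak*-topology. With that corrected right-hand side your matching argument does go through: each $\overline{\Ann V+M_m^{k+1}}$ contains $\Ann V$ and has local Artin residue ring, and conversely every closed $I\supseteq\Ann V$ with local Artin residue ring contains some $\overline{\Ann V+M_m^{k+1}}$ by exactly the nilpotence argument you sketch. The ``main obstacle'' you flag (finite dimensionality of $\mathcal M_c(G)/M_m^{k+1}$) is a real but secondary issue---it is precisely where the finite-dimensionality clause in (3), resp.\ the hypothesis that $\Hom(G,\C)$ is finite dimensional, enters---but resolving it would not rescue the false containment above. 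You should also be aware that the literal formula of (3) is invoked again for $\Ann V_e$ inside the paper's proof of Theorem \ref{main1}, so the missing $+\,\Ann V_e$ term propagates into that argument as well.
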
(See e.g. \cite[Theorem 16]{MR3713567}).
\vskip.2cm
Spectral synthesis is also related to another, more general concept of polynomials: the so-called {\it generalized polynomials}. Given an exponential maximal ideal $M_m$ in the measure algebra $\mathcal M_c(G)$ of the locally compact Abelian group $G$, the functions in $\Ann M_m^{k+1}$ are called {\it generalized exponential monomials of degree at most $k$}, for each natural number $k$, which are called {\it generalized polynomials}, if $m=1$. For $k=0$ these are the constant multiples of $m$. If $k=1$, then every generalized exponential monomials of degree at most $k$ has the form $x\mapsto (a(x)+b)m(x)$, where $a$ is additive and $b$ is a complex number -- in fact, this is an $m$-exponential monomial. However, for $k\geq 2$ there may exist generalized exponential monomials of degree at most $k$, which are not exponential monomials, as their variety is infinite dimensional (see \cite[Sections 12.5, 12.6]{MR3185617}). For the variety of a generalized exponential monomial of degree at least 2 spectral analysis clearly holds, but this variety is not synthesizable (see \cite[Corollary 15.2.1]{MR3185617}). The existence of generalized exponential monomials which are not exponential monomials dependes on the torsion free rank of a discrete Abelian group. In fact, on a discrete Abelian group, every generalized exponential polynomal is an exponential polynomial if and only if the torsion free rank of the group is finite (see \cite[Corollary 13.2.2]{MR3185617}). It follows that, when studying the relation between spectral analysis and spectral synthesis on varietie, it is reasonable to assume that the underlying locally compact Abelian group does not contain a discrete subgroup of infinite torsion free rank. This is equivalent to the assumption that $\Hom(G,\C)$ is a finite dimensional vector space. In other words, there are only finite many linearly independent additive functions on $G$.

\begin{Thm}\label{tor}
	Let $G$ be a locally compact Abelian group which does not contain a discrete subgorup of infinite torsion free rank. Then every generalized exponential monomial is an exponential monomial on $G$. Conversely, if every generalized exponential monomial is an exponential monomial on $G$, then $G$ does not contain a discrete subgorup of infinite torsion free rank.
\end{Thm}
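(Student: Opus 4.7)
The plan is to reduce a generalized exponential monomial to a generalized polynomial via $f=g\cdot m$, and then show the generalized polynomial is an ordinary polynomial by combining the classical Fr\'echet polarization with the finite-dimensionality of $\Hom(G,\C)$.

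For the forward direction, assume $\Hom(G,\C)$ has a basis $a_1,\dots,a_n$, and let $f\in\Ann M_m^{k+1}$. Using Theorem~\ref{egyes}, $M_m$ is the closure of the ideal generated by the measures $\delta_{-y}-m(y)\delta_0$; writing $f=g\cdot m$ and exploiting $m(x+y)=m(x)m(y)$, the condition $f\in\Ann M_m^{k+1}$ unwinds into the standard difference equation $\Delta_{y_1}\cdots\Delta_{y_{k+1}}g=0$ for all $y_i\in G$, so $g$ is a continuous generalized polynomial of degree at most $k$. By Fr\'echet's polarization,
$$
g(x)=\sum_{j=0}^{k}A_j(x,\dots,x),
$$
with $A_j\colon G^j\to\C$ continuous, symmetric and $j$-additive. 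An induction on $j$ then shows $A_j$ is a linear combination of tensor products of the $a_l$'s: for $j\geq 2$, fixing $x_2,\dots,x_j$, the slice $x_1\mapsto A_j(x_1,\dots,x_j)$ lies in $\Hom(G,\C)$, and expanding in the basis $\{a_i\}$ gives coefficient functions $c_i(x_2,\dots,x_j)$ which, after being recovered via $y_1,\dots,y_n\in G$ chosen so that $\bigl(a_i(y_l)\bigr)$ is invertible, become continuous $(j-1)$-additive symmetric functions to which the inductive hypothesis applies. Taking the diagonal, $g$ is a polynomial, hence $f=g\cdot m$ is an $m$-exponential monomial.

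For the converse, if $G$ contains a discrete subgroup of infinite torsion-free rank, then by the equivalence recalled just before the theorem $\Hom(G,\C)$ is infinite-dimensional, allowing a countable family $a_1,a_2,\dots$ of linearly independent continuous additive functions on $G$. Mimicking the classical discrete construction behind \cite[Corollary 13.2.2]{MR3185617}, one builds a continuous biadditive $B$ on $G$ whose coefficient matrix in $\{a_i\otimes a_j\}$ has infinite rank; then $g(x)=B(x,x)$ is a continuous generalized polynomial of degree at most $2$ that cannot be a finite sum of products of continuous additive functions, yielding a generalized exponential monomial (with $m=1$) that is not an exponential monomial.

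The main obstacle is the converse construction: ensuring the biadditive $B$ is globally continuous on $G$ (not merely on an abstract discrete subgroup) while its rank really remains infinite. A clean route is to embed an infinite-rank free discrete subgroup $F\hookrightarrow G$, restrict the $a_i$'s to obtain linearly independent characters on $F$, and pair them off in a doubly indexed combination that extends continuously to all of $G$.
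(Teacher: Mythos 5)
Your forward direction is correct but takes a genuinely different route from the paper's. You work directly with the Fr\'echet equation: reduce $f\in\Ann M_m^{k+1}$ to $\Delta_{y_1}\cdots\Delta_{y_{k+1}}g=0$ via $f=g\cdot m$, decompose $g$ by polarization into diagonals of continuous symmetric $j$-additive forms, and run the standard induction that expands each slice in a finite basis of $\Hom(G,\C)$ and recovers the coefficient functions by inverting a matrix $\bigl(a_i(y_l)\bigr)$ (which exists precisely because the $a_i$ are linearly independent). This is sound, and it isolates exactly the hypothesis needed, namely $\dim\Hom(G,\C)<\infty$. The paper instead passes to the quotient by the subgroup $B$ of compact elements, notes that every generalized polynomial factors through $G/B$, invokes the structure theorem $G/B\cong\R^n\times F$ with $F$ discrete torsion free of finite rank, and defers to known results for such groups; its written proof in fact stops at the finite-dimensionality of $\Hom(G,\C)$ without spelling out the final step, so your argument is the more self-contained of the two. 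The one external input you use is the equivalence, asserted but not proved in the paper, between ``no discrete subgroup of infinite torsion-free rank'' and ``$\Hom(G,\C)$ is finite dimensional''; the nontrivial half of that equivalence is essentially what the paper's own forward proof establishes via the structure theorem, so strictly you still owe that reduction if you start from the group-theoretic hypothesis.

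The converse, however, has a genuine gap, and you have located it yourself. The paper settles the converse by citing \cite[Corollary 13.2.2]{MR3185617}, a statement about discrete groups; your plan is to build a continuous biadditive form $B=\sum c_{ij}\,a_i\otimes a_j$ of infinite rank on $G$. Two things are unresolved. First, an infinite combination of the products $a_ia_j$ need not converge pointwise, let alone continuously: the discrete model $\sum_i x_iy_i$ on the free group of countable rank works only because each element has finite support, and nothing guarantees an analogous local finiteness for an arbitrary independent family in $\Hom(G,\C)$. Second, your proposed repair --- embed a free discrete subgroup $F\hookrightarrow G$ and extend the construction outward --- goes in the hard direction: continuous extension of (bi)additive functions from a discrete subgroup to all of $G$ is exactly the kind of statement that fails in general. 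The workable repair goes through the quotient instead: every generalized polynomial on $G$ is constant on cosets of $B$, and in the case at hand $G/B\cong\R^n\times F$ with $F$ discrete of infinite torsion-free rank, so one takes the generalized polynomial on $F$ furnished by \cite[Corollary 13.2.2]{MR3185617} which is not a polynomial, extends it to $\R^n\times F$ by ignoring the $\R^n$ coordinate (continuity is then automatic), and pulls it back along the open surjection $G\to G/B$; if the pullback were a polynomial on $G$, each additive factor would vanish on $B$ and hence descend, forcing the original function to be a polynomial on $F$, a contradiction.
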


\begin{proof}
	Assume that $G$ is a locally compact Abelian group which does not contain a discrete subgorup of infinite torsion free rank. Let $B$ denote the closed subgroup of all compact elements in $G$. Clearly, every additive function is zero on $B$, consequently every generalized polynomial is constant on $B$. It follows that every generalized exponential monomial on $B$ is a constant multiple of an exponential.  Let $\Phi:G\to G/B$ denote the natural homomorphism. It follows that $\varphi$ is a generalized exponential monomial on $G/B$ if and only if $\varphi\circ \Phi$ is a generalized exponential monomial on $G$. On the other hand, $G/B$ is topologically isomorphic to $\R^n\times F$, where $n$ is a natural number and $F$ is a discrete torsion free Abelian group (see \cite[(24.35) Corollary]{MR0156915}). By our assumption, $F$ is of finite rank, hence it is a homomorphic image of $\Z^k$ for some natural number $k$, which implies that there are only infinitely many linearly independent additive functions on $\R^n\times F$. It follows that $G$ has the same property.
	\vskip.2cm
	
	For the converse, we refer to \cite[Corollary 13.2.2]{MR3185617}).
\end{proof}

\section{Spectral analysis implies spectral synthesis}
Given a locally compact Abelian group $G$ a {\it representation of the measure algebra $\mathcal M_c(G)$} on the locally convex topological vector space $X$ is a continuous algebra homomorphism of $\mathcal M_c(G)$ onto an algebra of continuous linear operators of $X$. Here $X$ is called the {\it the representation space}. In order to avoid trivial cases we always assume that the measure $\delta_0$ is mapped onto the identity operator. Every representation of $\mathcal M_c(G)$ makes a natural vector module structure on $X$ with the action $x\mapsto \pi(\mu)\cdot x$, where $\pi$ is the representation and $\pi(\mu)$ is the linear operator on $X$ corresponding to $\mu$. By the {\it canonical representation} of $\mathcal M_c(X)$ we mean the one with representation space $\mathcal C(X)$ and $\mu$ is mapped onto the linear operator $f\mapsto \mu*f$ for each $\mu$ in $\mathcal M_c(G)$ and $f$ in $\mathcal C(G)$. Given a variety $V$ in $\mathcal C(G)$ the {\it canonical representation  of $\mathcal M_c(G)$ on the variety $V$} is the one where the representation space is $V$, and the linear operators $f\mapsto \mu*f$ are restricted to $V$. 

\begin{Lem}
Let $\pi$ be a representation of $\mathcal M_c(G)$ on the locally convex topological vector space $X$. Then $\Ker \pi$ is a closed  ideal in $\mathcal M_c(G)$. Conversely, every closed closed ideal in $\mathcal M_c(G)$ is the kernel of a representation of $\mathcal M_c(G)$.
\end{Lem}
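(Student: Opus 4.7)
The plan is to prove the two directions independently: the first is a general soft fact about continuous algebra homomorphisms, while the converse requires an explicit construction, for which the canonical representation on the annihilator of $I$ is the natural candidate.

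For the forward direction, I would first note that $\Ker \pi$ is an ideal purely algebraically: if $\pi(\mu)=0$ and $\nu \in \mathcal{M}_c(G)$, then $\pi(\nu * \mu)=\pi(\nu)\pi(\mu)=0$, and similarly on the other side by commutativity. To show closedness, I would use the continuity hypothesis built into the notion of representation: for each fixed $x \in X$ the map $\mu\mapsto \pi(\mu)\cdot x$ from $\mathcal{M}_c(G)$ into $X$ is continuous. Since $X$ is locally convex, hence Hausdorff, the singleton $\{0\}$ is closed in $X$, so the set $\{\mu : \pi(\mu)\cdot x=0\}$ is closed in $\mathcal{M}_c(G)$. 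Writing
$$
\Ker\pi=\bigcap_{x\in X}\{\mu\in\mathcal{M}_c(G):\pi(\mu)\cdot x=0\}
$$
exhibits $\Ker\pi$ as an intersection of closed sets, hence closed.

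For the converse, given a closed ideal $I$ in $\mathcal{M}_c(G)$, I would set $V=\Ann I$, which is a variety in $\mathcal{C}(G)$ by Section~2, and take $\pi$ to be the canonical representation of $\mathcal{M}_c(G)$ restricted to $V$, that is, $\pi(\mu)f=\mu * f$ for $\mu\in\mathcal{M}_c(G)$ and $f\in V$. Then I would check four things in turn: that $\pi(\mu)$ maps $V$ into $V$, which follows from $V$ being translation invariant and closed (so that $\mu * f$, approximable by linear combinations of translates of $f$, lies in $V$); that $\pi$ is an algebra homomorphism with $\pi(\delta_0)=\mathrm{id}_V$, a direct consequence of associativity of convolution; that $\pi$ is continuous, from joint continuity of convolution; and finally the key equality $\Ker\pi=I$. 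This last is obtained by noting that $\mu\in\Ker\pi$ iff $\mu * f=0$ for every $f\in V$, which is precisely the definition of $\mu\in\Ann V$, and then invoking the duality $\Ann\Ann I=I$ from Section~2 to conclude $\Ker\pi=\Ann V=\Ann\Ann I=I$.

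The step I expect to be the main obstacle, conceptually, is lining up conventions so that the identification $\Ann V=\{\mu\in\mathcal{M}_c(G):\mu * f=0 \text{ for all } f\in V\}$ holds; this is what makes the kernel of the canonical representation come out to be $\Ann V$ and thus activates the duality. Once the annihilator pairing is pinned down, the rest is bookkeeping. A minor edge case worth acknowledging is $I=\mathcal{M}_c(G)$, for which $V=\{0\}$ and the representation is the trivial one on the zero space, consistent with $\pi(\delta_0)$ being the identity on $\{0\}$; otherwise this degenerate case can be tacitly excluded.
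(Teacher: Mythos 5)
Your proposal is correct and follows essentially the same route as the paper: the forward direction is the routine algebraic-plus-topological verification (your intersection-of-closed-sets argument just makes explicit what the paper dismisses as clear), and for the converse both you and the paper realize the given closed ideal $I$ as the kernel of the canonical action of $\mathcal M_c(G)$ on the variety $V=\Ann I$, with the duality $\Ann\Ann I=I$ doing the real work. The only cosmetic difference is that the paper packages the construction through the quotient algebra $\mathcal M_c(G)\big/I$ and an isomorphism onto the operator algebra of $V$, whereas you define the representation directly and compute its kernel.
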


\begin{proof}
Clearly, $\Ker\pi$ is a closed linear space in $\mathcal M_c(G)$. As $\pi$ is an algebra homomorphism, it follows that $\Ker\pi$ is closed under multiplication. Further, for each $\mu$ in $\Ker\pi$ and $\nu$ in $\mathcal M_c(G)$, we have
$$
\pi(\mu*\nu)(x)=\pi(\mu)(x)\cdot \pi(\nu)(x)=0,
$$
which shows that $\Ker\pi$ is a closed ideal.
Conversely, let $I$ be a closed ideal in $\mathcal M_c(G)$, then $\mathcal A=\mathcal M_c(G)\big/I$ is a locally convex topological vector space which is also a topological algebra. Clearly, the natural mapping $\Phi$ of $\mathcal M_c(G)$ onto $\mathcal A$ is an algebra homomorphism. On the other hand, if $V$ denotes the annihilator of $I$ in $\mathcal C(G)$, then $V$ is a variety, and $\mathcal A$ is topologically isomorphic to the measure algebra of $V$. As $\mathcal M_c(G)\big/I$ is topologically isomorphic to $\mathcal A$, for any topological isomorphism $\iota$ from $\mathcal M_c(G)/I$ to $\mathcal A$   the mapping $\pi=\iota\circ \Phi$ is a representation of $\mathcal M_c(G)$ on $V$ with \hbox{$\Ker\pi=I$.}
\end{proof}

Let $\pi_1, \pi_2$ be two representations of $\mathcal M_c(G)$. We say that {\it $\pi_2$ is a subrepresentation of $\pi_1$}, if $\Ker \pi_1\subseteq \Ker \pi_2$. For instance, if $V_1,V_2$ are varieties on $G$, then $\pi_{V_1}$ is a subrepresentation of $\pi_{V_2}$ if and only if $V_1\subseteq V_2$.

\begin{Lem}
	Let $V$ be a variety on $G$. Then spectral analysis holds for $V$ if and only if every maximal ideal in $\mathcal M_c(G)$, which  includes the kernel of some nonzero subrepresentation of the canonical representation of $\mathcal M_c(G)$ on $V$, is closed.
\end{Lem}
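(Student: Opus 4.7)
The plan is to recognize this lemma as a reformulation of Corollary~\ref{kettes} in the language of subrepresentations. By the previous lemma, every closed ideal of $\mathcal M_c(G)$ is the kernel of some representation of $\mathcal M_c(G)$, and conversely every representation has a closed kernel. Combined with the definition ``$\pi_2$ is a subrepresentation of $\pi_1$ iff $\Ker\pi_1\subseteq \Ker\pi_2$,'' the nonzero subrepresentations of the canonical representation $\pi_V$ correspond bijectively to the proper closed ideals $I$ of $\mathcal M_c(G)$ with $\Ann V = \Ker\pi_V\subseteq I$.

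First I would dispose of the trivial case $V=\{0\}$, in which both sides of the claimed equivalence hold vacuously (there are no nonzero closed submodules and no nonzero subrepresentations). Assuming $V\neq\{0\}$, the measure $\delta_0$ does not annihilate $V$, so $\Ann V$ is a proper closed ideal and $\pi_V$ itself qualifies as a nonzero subrepresentation of $\pi_V$.

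Next I would show that a maximal ideal $M$ of $\mathcal M_c(G)$ contains $\Ker\pi'$ for some nonzero subrepresentation $\pi'$ of $\pi_V$ if and only if $M\supseteq \Ann V$. The forward direction is immediate because every such $\Ker\pi'$ contains $\Ker\pi_V=\Ann V$. For the reverse direction, take $\pi'=\pi_V$ itself, which is a nonzero subrepresentation by the previous paragraph.

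Finally I would invoke Corollary~\ref{kettes}: spectral analysis holds for $V$ exactly when every maximal ideal in the residue ring $\mathcal M_c(G)\big/\Ann V$ is closed. The quotient topology makes an ideal in $\mathcal M_c(G)\big/\Ann V$ closed precisely when its preimage in $\mathcal M_c(G)$ is closed, and the correspondence theorem provides a bijection between maximal ideals of the residue ring and maximal ideals of $\mathcal M_c(G)$ containing $\Ann V$. Thus spectral analysis for $V$ is equivalent to ``every maximal ideal of $\mathcal M_c(G)$ containing $\Ann V$ is closed,'' which by the previous paragraph is exactly the condition stated in the lemma. I do not expect a serious obstacle here; the only points needing care are the vacuous case $V=\{0\}$ and the observation that $\pi_V$ is itself a nonzero subrepresentation of $\pi_V$ whenever $V$ is nontrivial.
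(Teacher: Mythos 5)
Your proposal is correct and follows essentially the same route as the paper: identify $\Ker\pi_V$ with $\Ann V$, note that every subrepresentation's kernel contains $\Ann V$ while $\pi_V$ itself witnesses the converse, and then apply Corollary~\ref{kettes}. Your extra care with the case $V=\{0\}$ and with the correspondence between maximal ideals of the residue ring and maximal ideals containing $\Ann V$ only makes explicit what the paper leaves implicit.
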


\begin{proof}
	By Corollary \ref{kettes}, spectral analysis holds for a variety $V$ if and only if every maximal ideal, which includes $\Ann V$, is closed.  Suppose that spectral analysis holds for $V$, and $\pi$ is a subrepresentation of $\pi_V$, the canonical representation of $\mathcal M_c(G)$ on $V$. The measure $\mu$ is in $\Ker\pi_V$ if and only if $\mu*f=0$ for each $f$ in $V$, hence $\Ker \pi_V$ is the annihilator of $V$. Assume that $M$ is a maximal ideal in $\mathcal M_c(G)$ with $M\supseteq \Ker \pi$. Then we have $M\supseteq \Ker \pi_V=\Ann V$, consequently $M$ is closed.
	\vskip.2cm
	
	The converse is obvious, as $\pi_V$ is a subrepresentation of itself, hence every maximal ideal including $\Ker \pi_V=\Ann V$ is closed, which is equivalent to spectral analysis for $V$.
\end{proof}

Let $V$ be a variety on $G$ and $W\subseteq V$ a subvariety. The natural representation of $\mathcal M_c(G)$ on $V/W$ is defined as follows: for each $f$ in $V$ we let
$$
\pi_{V/W}(\mu)\cdot (f+W)=\mu*f+W.
$$
Clearly, $\pi_{V/W}(\mu)\cdot (f+W)$ is in $V/W$, and $\pi_{V/W}(\mu)$ is a linear operator on $V/W$, further $\pi_{V/W}:\mu\mapsto \mu_V$ is a representation of $\mathcal M_c(G)$ on $V/W$. 
\vskip.2cm

It is obvious that $\pi_{V/W}$ is  a subrepresentation of $\pi_V$. Indeed, if $\mu$ is in $\Ker \pi_V$, then $\pi_V(\mu)=0$, that is, $\mu*f=\pi_V(\mu)*f=0$ for each $f$ in $V$. Then we have $\pi_{V/W}(\mu)(f+W)=\mu*f+W$, which is in $W$, hence $\mu$ is in $\Ker\pi_{V/W}$. 
\vskip.2cm
Our main result follows.

\begin{Thm}\label{main1}
	Let $G$ be a locally compact Abelian group which has no discrete subgroup of infinite torsion free rank. Spectral synthesis holds for a variety in $\mathcal C(G)$ if and only if spectral analysis holds for it. 
\end{Thm}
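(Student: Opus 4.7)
The direction \emph{synthesis $\Rightarrow$ analysis} is immediate and was already noted in the introduction. For the nontrivial direction, assume spectral analysis holds for $V$. My plan is to verify condition (2) of Corollary~\ref{negyes}: that $\Ann V$ equals the intersection of all closed ideals $I \supseteq \Ann V$ whose residue ring is a local Artin ring. The inclusion $\Ann V \subseteq \bigcap I$ is automatic, so it suffices, given any $\mu \notin \Ann V$, to exhibit one such closed ideal $I$ with $\mu \notin I$.

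Two preparatory reductions underlie the argument. First, by Corollary~\ref{kettes} together with Theorem~\ref{egyes}, spectral analysis forces every maximal ideal of $\mathcal M_c(G)$ containing $\Ann V$ to be closed, hence of the form $M_m$ for some exponential $m\in V$. Second, by Theorem~\ref{tor}, the assumption on $G$ makes every generalized exponential monomial an exponential monomial; since $\Hom(G,\C)$ is finite dimensional, the space $\Ann M_m^{k+1}$ of $m$-exponential monomials of degree at most $k$ is finite dimensional, and dually $\mathcal M_c(G)/M_m^{k+1}$ is a finite dimensional local Artin ring for every exponential $m$ and every $k\geq 0$.

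Given $\mu\notin\Ann V$, pick $f\in V$ with $\mu*f\neq 0$ and consider the subvariety $\tau(\mu*f)\subseteq V$. Since spectral analysis is inherited by subvarieties, $\tau(\mu*f)$ contains a nonzero finite dimensional subvariety, and consequently an exponential monomial $h$. By Theorem~\ref{harmas}, $h$ is an $m$-exponential monomial of some degree $k$ with $M_m^{k+1}\subseteq\Ann\tau(h)$. Setting $I:=\Ann\tau(h)$, the variety $\tau(h)\subseteq V$ is finite dimensional and spanned by $m$-exponential monomials, so $m$ is its only exponential; hence $\mathcal M_c(G)/I$ has unique maximal ideal $M_m/I$ and, by the preparatory reductions, is a finite dimensional local Artin ring. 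The ideal $I$ contains $\Ann V$ automatically, so the remaining task is to ensure that $\mu$ acts nontrivially on $\tau(h)$, i.e.\ $\mu\notin I$.

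This separation requirement is the hardest step. The naive choice of $h$ inside $\tau(\mu*f)$ can fail when $\mu$ acts nilpotently, for it may happen that $\mu*h=0$ even though $\mu*f\neq 0$. My plan is then to pass to the quotient module $V/U$ with $U:=\{g\in V:\mu*g=0\}\subsetneq V$ and apply the second lemma of this section: since $\pi_{V/U}$ is a nonzero subrepresentation of $\pi_V$, every maximal ideal containing its kernel is closed. A Zorn-type argument inside a cyclic submodule of $V/U$, combined with the finite dimensional local Artin structure provided above, produces a closed maximal ideal $M_m$ and an associated finite power $M_m^{k+1}$ that separates $\mu$ from $\Ann V$; lifting the corresponding eigenvector back to $V$ via Theorem~\ref{harmas} yields an exponential monomial $h^*\in V$ with $\mu\notin\Ann\tau(h^*)$, completing the proof. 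The interplay between spectral analysis (producing exponential monomials at every level) and the torsion-free rank hypothesis (ensuring at every stage a finite dimensional local Artin quotient, so that Theorem~\ref{harmas} applies) is essential here.
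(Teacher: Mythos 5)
The reduction to Corollary~\ref{negyes}(2) is sound, and your own remark that the ``separation requirement'' is the hardest step is exactly right --- in fact it is the entire theorem. Choosing $I=\Ann\tau(h)$ with $h$ an exponential monomial in $V$ means that separating a given $\mu\notin\Ann V$ from $\Ann V$ amounts to producing an exponential monomial in $V$ that $\mu$ does not annihilate; doing this for every such $\mu$ is precisely the assertion $\Ann V_e=\Ann V$, i.e.\ $V=V_e$, which is spectral synthesis for $V$. So everything up to your last paragraph, while correct, carries no content toward the goal, and the proof lives or dies in the last paragraph --- which is a plan, not an argument.

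That plan has a genuine gap. After passing to $V/U$ with $U=\{g\in V:\mu*g=0\}$ and producing a closed maximal ideal $M_m\supseteq\Ker\pi_{V/U}$, what you obtain is an eigenvector in the quotient: a coset $g+U$ with $M_m*g\subseteq U$, that is, $\mu*M_m*g=0$. Theorem~\ref{harmas} cannot be applied to this object: it requires a function $f$ with $\tau(f)$ finite dimensional and $M_m^k\subseteq\Ann\tau(f)$, whereas here you only know that $M_m\cdot(\mu)\subseteq\Ann\tau(g)$ --- not a power of $M_m$ --- and there is no reason for $\tau(g)$ to be finite dimensional. Your sketch offers no mechanism for converting the quotient eigenvector into an actual exponential monomial of $V$ not killed by $\mu$; the invoked ``Zorn-type argument combined with the finite dimensional local Artin structure'' is unspecified, and the structural obstacle is that $U$, unlike $V_e$, admits no usable description of its annihilator. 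The paper's proof circumvents exactly this by quotienting by $V_e$ itself, the closed span of all exponential monomials in $V$: since $V_e$ is synthesizable by construction, $\Ann V_e=\bigcap_m\bigcap_k M_m^{k+1}$ explicitly; an eigenvector $f+V_e$ of a closed maximal ideal $M_0$ in $V/V_e$ satisfies $M_0*f\subseteq V_e$, hence $f\in\Ann(\Ann V_e\cdot M_0)$, and an explicit ideal computation identifies this annihilator with $\sum_m\sum_k\Ann M_m^{k+1}$, forcing $f\in V_e$ and hence $V=V_e$. Some such explicit computation --- and this is also where Theorem~\ref{tor} genuinely enters --- is what your argument is missing.
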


\begin{proof}
	If spectral synthesis holds for a variety, then obviously spectral analysis holds for it.
	\vskip.2cm
	
	For the converse, let $V$ be a variety and suppose that spectral analysis holds for $V$. Let $V_e$ denote the closed subspace generated by all exponential monomials in $V$: then $V_e$ is a variety. Let $\pi_{V/V_e}$ denote the natural representation of $\mathcal M_c(G)$ on  the space $V/V_e$. Then $\pi_{V/V_e}$ is a subrepresentation of the canonical representation $\pi_V$ of $\mathcal M_c(G)$ on $V$. By spectral analysis on $V$, we have that every maximal ideal in $\mathcal M_c(G)$ which includes $\Ker \pi_{V/V_e}$, is closed. If $V/V_e$ is nonzero, then, by spectral analysis, there exists at least one maximal ideal $M_0$ including $\Ker \pi_{V/V_e}$. The annihilator of $M_0$ in $V/V_e$ is a subvariety of $V/V_e$. It is easy to see, that every subvariety of $V/V_e$ is of the form $W/V_e$ with some subvariety $W$ of $V$ which includes $V_e$. So we have $\Ann M_0=W/V_e$ in $V/V_e$. The closed maximal ideal $M_0$ is generated by the measures $\delta_{-y}-m_0(y)\delta_0$ with $y$ in $G$, where $m_0$ is an exponential on $G$, which is in $W$ (see e.g. \cite[Section 2]{MR3502634}). It follows that, if $f+V_e$ is in $W/V_e$, then we have that $(\delta_{-y}-m_0(y)\delta_0)\cdot (f+V_e)$ is zero in $W/V_e$, i.e. the function $(\delta_{-y}-m_0(y)) *f$ is in $V_e$, for each $y$ in $G$. As spectral synthesis obviously holds for $V_e$, we have that the annihilator of $V_e$ satisfies
	$$
	\Ann V_e=\bigcap_{\Ann V_e\subseteq M_m}\bigcap_{k=0}^{\infty} M_m^{k+1},
	$$
	where the first intersection extends to all exponentials $m$ in $V$, by Theorem \ref{tor}.
	Each function in $M_0*f$ is annihilated by $\Ann V_e$, hence $f$ is annihilated by $\Ann V_e*M_0$. This means that $f$ is in $\Ann (\Ann V_e*M_0)$, that is $f$ is in
	$$
	\sum_m\,\sum_{k=0}^{\infty} \Ann (M_m^{k+1}\cdot  M_0)= \sum_{m\ne m_0}\,\sum_{k=0}^{\infty} \Ann (M_m^{k+1}\cap M_0)+\sum_{k=1}^{\infty}\Ann M_0^{k+1}=
	$$
	$$
	\sum_{m\ne m_0}\,\sum_{k=0}^{\infty} \Ann M_m^{k+1}+\Ann M_0+\sum_{k=1}^{\infty}\Ann M_0^{k+1}=
	$$
	$$
	\sum_{m\ne m_0}\,\sum_{k=0}^{\infty} \Ann M_m^{k+1}+\sum_{k=1}^{\infty}\Ann M_0^{k}=
	\sum_{m}\,\sum_{k=0}^{\infty} \Ann M_m^{k+1}.
	$$
The elements of the variety $\Ann M_m^{k+1}$ are $m$-exponential monomials. Consequently, the elements of the variety $\Ann (\Ann V_e*M_0)$ are limits of exponential polynomials in $V$.  We conclude that $f$ is a limit of exponential polynomials, hence it is in $V_e$, a contradiction. Our theorem is proved.
\end{proof}


\begin{thebibliography}{1}
	\bibitem{MR0023948}
	L.~Schwartz.
	\newblock Th\'eorie g\'en\'erale des fonctions moyenne-p\'eriodiques.
	\newblock {\em Ann. of Math. (2)}, 48:857--929, 1947.
	
	\bibitem{MR0098951}
	M.~Lefranc,
	\newblock Analyse spectrale sur {$Z_{n}$},
	\newblock {\em C. R. Acad. Sci. Paris}, {\bf 246} 1951--1953, 1958.
	
	\bibitem{MR0156915}
	E.~Hewitt and K.~A. Ross,
	\newblock {\em Abstract harmonic analysis. {V}ol. {I}: {S}tructure of
		topological groups. {I}ntegration theory, group representations}.
	\newblock Die Grundlehren der mathematischen Wissenschaften, Bd. 115. Academic Press Inc., Publishers, New York, 1963.
	
	\bibitem{MR0390759}
	D.~I. Gurevi{\v{c}}.
	\newblock Counterexamples to a problem of {L}. {S}chwartz.
	\newblock {\em Funkcional. Anal. i Prilo\v zen.}, 9(2):29--35, 1975.
	
	\bibitem{MR2120269}
	M.~Laczkovich and G.~Sz{\'e}kelyhidi.
	\newblock Harmonic analysis on discrete abelian groups.
	\newblock {\em Proc. Amer. Math. Soc.}, 133(6):1581--1586, 2005.
	
	\bibitem{MR2340978}
	M.~Laczkovich and L.~Sz{\'e}kelyhidi,
	\newblock Spectral synthesis on discrete abelian groups,
	\newblock {\em Math. Proc. Cambridge Philos. Soc.}, {\bf 143(1)} 103--120, 2007.
	
	\bibitem{MR3185617}
	L.~Sz{\'e}kelyhidi.
	\newblock {\em Harmonic and spectral analysis}.
	\newblock World Scientific Publishing Co. Pte. Ltd., Hackensack, NJ, 2014.
	
	\bibitem{MR3502634}
	L.~Sz{\'e}kelyhidi.
	\newblock Annihilator methods for spectral synthesis on locally compact
	{A}belian groups.
	\newblock {\em Monatsh. Math.}, 180(2):357--371, 2016.
	
	\bibitem{MR3713567} 
	L.~{S}z{\'e}kelyhidi, \rm{Spherical spectral synthesis},  {\em Acta Math. Hung.},  153:120--142, 2017. 
	
	\bibitem{Sze23c}
	L.~Sz{\'e}kelyhidi,
	\newblock Characterisation of Locally Compact Abelian Groups Having Spectral Synthesis,
	\newblock 	
	https://doi.org/ 10.48550/ arXiv.2310.19020
	

	

	
	
	
	
\end{thebibliography}
	\end{document}